\documentclass[11pt]{amsart}
\usepackage{amssymb,latexsym,amsmath,amsthm,enumitem,mathrsfs,geometry}
\usepackage{fancyhdr}
\usepackage{color}
\usepackage{stmaryrd}
\usepackage{hyperref}
\hypersetup{
    colorlinks=true,
    linkcolor=red,
    filecolor=magenta,
    urlcolor=cyan,
    citecolor=green
    }
\usepackage{lineno}
\usepackage{diagbox}
\usepackage{array}
\usepackage{tikz}
\usetikzlibrary{arrows}
\usetikzlibrary{positioning}
\usepackage{float}
\usepackage{bbm}
\usepackage{cleveref}
\usepackage{dsfont}
\usepackage{graphicx}
\usepackage{soul}
\usepackage{verbatim}
\usepackage{comment}
\usepackage{caption} 
\usepackage[symbol*]{footmisc}

\newtheorem{theorem}{Theorem}%[section]
\newtheorem{lemma}{Lemma}%[section]

\newtheorem*{theorem*}{Theorem}
\theoremstyle{remark}

\theoremstyle{plain}
\newtheorem*{MT}{Montgomery Theorem (MT)}%[section]
\definecolor{pink}{rgb}{1,.2,.6}
\definecolor{orange}{rgb}{0.7,0.3,0}
\definecolor{blue}{rgb}{.2,.6,.75}
\definecolor{green}{rgb}{.4,.7,.4}
\definecolor{purple}{RGB}{127,0,255}
\numberwithin{equation}{section}
\allowdisplaybreaks

\title[ Zeta Zeros in a narrow vertical box]{Zeta Zeros in a narrow vertical box }
\author[Goldston]{Daniel A. Goldston}
\address{Department of Mathematics and Statistics, San Jose State University}
\email{daniel.goldston@sjsu.edu}

\author[Suriajaya]{Ade Irma Suriajaya}
\address{Faculty of Mathematics, Kyushu University}
\email{adeirmasuriajaya@math.kyushu-u.ac.jp}

\keywords{Riemann zeta-function, zeros, pair correlation, simple zeros, critical zeros}
\subjclass[2010]{11M06, 11M26}
\begin{document}
%\tableofcontents

\begin{abstract}
In 1973 Montgomery proved, assuming the Riemann Hypothesis (RH), that asymptotically at least 2/3 of zeros of the Riemann zeta-function are simple zeros. In a previous note \cite{GS25} we showed how RH can be replaced with a general estimate for a double sum over zeros, and this allows one to then obtain results on zeros that are both simple and on the critical line. Here we give a simple proof based on a direct generalization of Montgomery's proof that on assuming all the zeros are in a narrow vertical box between height $T$ and $2T$ of width $b/\log T$ and centered on the critical line, then, if $b=b(T)\to 0$ as $T\to \infty$, we have asymptotically at least 2/3 of the zeros are simple and on the critical line. 
\end{abstract}
\date{\today}

\maketitle
%%%%%%%%%%%%%%%%%%%%%%%%%%%%%%%%
\section{Introduction} In 1973 Montgomery \cite{Montgomery73} proved, assuming the Riemann Hypothesis (RH), that at least 2/3 of the zeros of the Riemann zeta-function are simple. Prior to this, it was not known either unconditionally or on RH that there were even infinitely many simple zeros. Within a few years of this result, Levinson \cite{Levinson74,Levinson75} proved unconditionally that at least 34.74\% of the zeros are simple and on the critical line. The best current unconditional result \cite{Pra20} in 2020 proved that at least $41.7\%$ of the zeros of $\zeta(s)$ are on the critical line, and at least $40.7\%$ of the zeros are on the critical line and simple. In \cite[Theorem 1]{BGST-CL} we proved with Baluyot and Turnage-Butterbaugh the following conditional result.
\begin{theorem}[Baluyot, Goldston, Suriajaya, Turnage-Butterbaugh]\label{thm1} Assuming all of the zeros $\rho = \beta + i\gamma$ of $\zeta(s)$ with $T<\gamma \le 2T$ lie in the region
\begin{equation} \label{Bb} B_b= \{s=\sigma + i t \ : \ \Big|\sigma - \frac12\Big|< \frac{b}{2\log T},\ T< t\le 2T, \quad b=b(T) \to 0 \ \text{as} \ T\to \infty \}.\end{equation} Then for the zeros of $\zeta(s)$ with $T<\gamma \le 2T$, we have asymptotically that at least 2/3 are simple and on the critical line.
\end{theorem}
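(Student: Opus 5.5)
The plan is to generalize Montgomery's pair correlation argument directly, without assuming RH. Instead we use the hypothesis that every zero $\rho=\beta+i\gamma$ with $T<\gamma\le 2T$ lies in $B_b$. The first task is to recover Montgomery's explicit formula. Take $x=T^{\alpha}$ and the weight $w(u)=4/(4+u^2)$, and set
\[
F(\alpha)=\frac{2\pi}{T\log T}\sum_{T<\gamma,\gamma'\le 2T} x^{i(\gamma-\gamma')}\,x^{(\beta-\frac12)+(\beta'-\frac12)}\,w(\gamma-\gamma').
\]
This is essentially $\int_T^{2T}\bigl|\sum_\rho \frac{x^{\rho-1/2}}{1+(t-\gamma)^2}\bigr|^2dt$. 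Montgomery's explicit formula for $\sum_\rho x^{\rho}/(1+(t-\gamma)^2)$ only uses the location of zeros through the factors $x^{\beta-1/2}$. In the box these satisfy $|x^{\beta-1/2}|\le T^{\alpha b/(2\log T)}=e^{\alpha b/2}=1+o(1)$ uniformly for bounded $\alpha$. Redoing Montgomery's mean-value computation with these perturbed factors should therefore give
\[
F(\alpha)=(1+o(1))\,T^{-2\alpha}\log T+\alpha+o(1)
\]
uniformly for $0\le\alpha\le1$. Here the $o(1)$ depends on $b\to0$. The same computation gives nonnegativity of a suitably Fourier-averaged version, since $F$ still arises as an $L^2$ norm.

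The second step is to convert this into a lower bound for $N^*(T)=\sum_{T<\gamma\le 2T} m_\rho$, the sum of multiplicities, in the form $N^*(T)\le(\tfrac43+o(1))N(T)$. As Montgomery does, we integrate $F$ against the Fej\'er-type kernel $\hat r(\alpha)=(1-|\alpha|)_+$, whose Fourier transform $r(u)=\bigl(\frac{\sin\pi u}{\pi u}\bigr)^2$ is nonnegative with $r(0)=1$. The resulting sum is
\[
\sum_{\gamma,\gamma'} x^{(\beta-\frac12)+(\beta'-\frac12)} r\!\Big(\tfrac{(\gamma-\gamma')\log T}{2\pi}\Big)w(\gamma-\gamma').
\]
Because $r,w\ge0$, we may drop the terms with $\rho\ne\rho'$ as complex numbers. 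There is one extra subtlety off the line: terms with $\gamma=\gamma'$ but $\beta\ne\beta'$ must be kept. The diagonal terms $\rho=\rho'$, counted with multiplicity, contribute $\sum m_\rho^2\cdot(1+o(1))$. Zeros with equal ordinate contribute at least as much, so the sum is at least $(1+o(1))\sum_{\gamma} m_\gamma^2$, where $m_\gamma$ is the total multiplicity of zeros at ordinate $\gamma$. Evaluating the integral with the asymptotic above gives $\tfrac43$, so
\[
\sum_{T<\gamma\le2T}m_\gamma\le\sum m_\gamma^2\le\big(\tfrac43+o(1)\big)N(T).
\]

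The last step is the counting. Since $2-m_\gamma\ge[m_\gamma=1]$ fails only when $m_\gamma\ge 2$, we have $\#\{\gamma: m_\gamma=1\}\ge\sum(2m_\gamma-m_\gamma^2)\ge(2-\tfrac43+o(1))N(T)=(\tfrac23+o(1))N(T)$. Zeros are counted with multiplicity here, and each zero with $m_\gamma=1$ is the only zero at its ordinate. The functional equation reflects zeros in the line $\sigma=\tfrac12$, so a lone zero at ordinate $\gamma$ must have $\beta=\tfrac12$, since otherwise $1-\bar\rho$ would share its ordinate. Hence at least $2/3$ of the zeros are simple and on the critical line.

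The main obstacle is the first step: proving the asymptotic for $F(\alpha)$ with uniform error $o(1)$ for $0\le\alpha\le1$ without RH. One must check that every place where Montgomery uses $\beta=\tfrac12$ only costs a factor $e^{O(\alpha b)}$. This matters especially for the term $T^{-2\alpha}\log T$, which dominates near $\alpha=0$ and must be handled with relative error $o(1)$, and for controlling $\int_T^{2T}|\sum_\gamma x^{\beta-1/2}(\ldots)|^2$ through the prime-side sum. I would first try to write the explicit formula exactly, with $x^{\rho}$ in place of $x^{1/2+i\gamma}$, and bound the differences using $|x^{\beta-1/2}-1|\ll \alpha b$ together with the trivial estimate $\sum_\gamma \frac{1}{1+(t-\gamma)^2}\ll\log T$.
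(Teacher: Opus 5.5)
\textbf{The gap is in your second step.} The factor $x^{(\beta-\frac12)+(\beta'-\frac12)}=T^{\alpha(\beta+\beta'-1)}$ depends on $\alpha$, so it cannot stay outside the integral when you integrate against $(1-|\alpha|)_+$. What each pair actually receives is
\[
\int_{-1}^{1}(1-|\alpha|)\,T^{\alpha[(\beta+\beta'-1)+i(\gamma-\gamma')]}\,d\alpha=\Big(\frac{\sin z}{z}\Big)^2,\qquad z=\tfrac12\big[(\gamma-\gamma')-i(\beta+\beta'-1)\big]\log T ,
\]
which is the Fej\'er kernel at a non-real point. It is not nonnegative. At $\Re z=\pi$ one has $\sin^2 z=-\sinh^2(\Im z)$, so the real part is negative whenever $0<|\Im z|<\pi$. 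Hence the step ``because $r,w\ge0$ we may drop the terms with $\rho\ne\rho'$'' is unjustified. A crude repair also fails. Writing $T^{\alpha(\beta+\beta'-1)}=1+O(b)$ perturbs every weight by $O(b)$, while $\sum_{\rho,\rho'}w(\gamma-\gamma')$ has order $T\log^2T$. The resulting error $O(bT\log^2T)$ is $o(T\log T)$ only if $b=o(1/\log T)$, not for $b\to0$ arbitrarily slowly.

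\textbf{What is missing is control of pairs of zeros at scale $1/\log T$.} The paper supplies it in two pieces.
\begin{itemize}
\item Selberg's bound (Lemma 3) shows that the complex weight differs from $\big(\frac{\sin(\frac12(\gamma-\gamma')\log T)}{\frac12(\gamma-\gamma')\log T}\big)^2$ by $\ll be^b/(1+((\gamma-\gamma')\log T)^2)$. So the perturbation decays on the scale $1/\log T$.
\item The unconditional close-pair estimate (Lemma 2), $\#\{(\rho,\rho'):|\gamma-\gamma'|\le h\}\ll(1+h\log T)T\log T$, summed over a dyadic decomposition, bounds the total perturbation by $O(be^bT\log T)=o(T\log T)$.
\end{itemize}
Only after replacing the complex weight by the real, nonnegative Fej\'er kernel can you use positivity to get $\sum_{\gamma=\gamma'}1\le(\frac43+o(1))\frac{T}{2\pi}\log T$. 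From there your counting step is correct; it is Theorem 2(i) with $\mathbf C=\frac43$. Note that it needs an \emph{upper} bound, and $N(2T)-N(T)$ in place of $N(T)$.

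\textbf{Your first step is not where the difficulty lies.} Montgomery's asymptotic for $\mathcal F(\alpha,T)=\sum T^{\alpha(\rho-\rho')}W(\rho-\rho')$ holds with no hypothesis on the zeros. After the reflection $\rho'\mapsto 1-\bar\rho'$, your $F(\alpha)$ equals $\frac{2\pi}{T\log T}\sum T^{\alpha(\rho-\rho')}w(\gamma-\gamma')$. In the box this is within $O(b)$ of the normalized $\mathcal F(\alpha,T)$. The comparison you propose there, bounding $|x^{\beta-\frac12}-1|\ll\alpha b$ against $\sum_\gamma(1+(t-\gamma)^2)^{-1}\ll\log T$, would lose a power of $\log T$ for the same reason as above.
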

To obtain this result, the proof used a specially designed \lq \lq Tsang kernel" \cite{Tsang3}. With more work and computation, in \cite[Theorem 2]{BGST-CL} we proved that in place of $b=b(T) \to 0$ in Theorem \ref{thm1} we can take $b=0.3185$, and further that with $b=0.001$ at least $67.25\%$ of the zeros are simple and on the critical line. 

In this note we will give a simpler proof of Theorem \ref{thm1} avoiding the Tsang kernel and following closely Montgomery's proof using just the Fej{\'e}r kernel and no computation. Everything needed in this proof can be found in \cite{Sel46}, \cite{Montgomery73}, and \cite{GaMu78}. Thus, if RH had been replaced by the assumption \eqref{Bb} in Theorem \ref{thm1}, then, within a few years of when Montgomery obtained his results there never was any fundamental obstacle to using the pair correlation method to detect critical zeros; see \cite{GS25}.

%%%%%%%%%%%%%%%%%%%%%%
\section{Convention for counting zeros with multiplicity}
\label{sec:convention}

Analytic functions have their zeros counted by the argument principle, which counts zeros with their multiplicity. Thus a zero $\rho=\beta+i\gamma$ with multiplicity $m_\rho $ is counted as $m_\rho$ zeros. To handle this situation, we replace the \lq\lq set" of zeta-zeros $Z=\{ \rho: \zeta(\rho)=0\}$ with the multiset $\mathcal{Z} = \{ \rho: \zeta(\rho)=0, \ \rho \ \text{has} \ m_\rho\ \text{copies}\}$. We now make the convention that zeros are taken from the multiset $\mathcal{Z}$. Thus, letting $ N(T)$ denote the number of zeros in the critical strip with $0< \gamma \le T$ counted with multiplicity, we have
\begin{equation} \label{N(T)multiset} N(T) = |\{\rho\in\mathcal{Z}: 0<\gamma\le T\}| = \sum_{\substack{\rho \in \mathcal{Z}\\ 0<\gamma\le T}} 1 .
\end{equation}

%%%%%%%%%%%%%%%%%%%%%%
\section{ Counting zeros in the critical strip}

In 1905 von Mangoldt proved that, for $T\ge3$,
\begin{equation} \label{N(T)sum} N(T)= \sum_{\substack{\rho \in \mathcal{Z} \\ 0<\gamma \le T}}1 = \frac{T}{2\pi}\log \frac{T}{2\pi} - \frac{T}{2\pi} + O(\log{T}) , \end{equation}
and we have the useful special cases
\begin{equation} \label{N(T)easy}N(T)~\sim~ \frac{T}{2\pi}\log T \quad \text{as} \ T\to \infty, \qquad \text{and} \qquad N(T+1)-N(T) = O(\log T).
\end{equation}
Using the second estimate in \eqref{N(T)easy}, or directly as in \cite[Lemma, Ch. 15]{Dav2000}, we have
\begin{equation}\label{w(u)estimate} \sum_{\rho\in\mathcal{Z}} \frac{1}{1+ (t-\gamma)^2 }\ll \log(|t|+2).\end{equation}
As an example we will need later, we obtain for $w(u)=4/(4+u^2)$ from \eqref{F(alpha,T)},
\begin{equation} \label{w-estimate} \sum_{\substack{\rho, \rho' \in \mathcal{Z} \\ 0<\gamma,\gamma'\le T}} w(\gamma -\gamma') \ll \sum_{\substack{\rho \in \mathcal{Z}\\0<\gamma \le T}}\sum_{\rho' \in \mathcal{Z}} \frac{1}{1+(\gamma-\gamma')^2} \ll \sum_{\substack{\rho \in \mathcal{Z}\\0<\gamma \le T}}\log\gamma \le N(T)\log T \ll T\log^2T. 
\end{equation}

%%%%%%%%%%%%%%%%%%%%%%
\section{Montgomery's Simple Zero Method without RH}

In \cite[Theorems 2 and 3]{GS25} we proved by elementary reasoning the following extension of Montgomery's RH simple zero argument. Part {\it iii)} below is due to Soundararajan (in private communication). The result also holds with the same proof if the range $0<\gamma,\gamma'\le T$ is replaced with $T<\gamma,\gamma'\le 2T$.
\begin{theorem} \label{thm2} Suppose there exists a constant $\mathbf{C}\ge 1$ such that, as $T\to \infty$,
\begin{equation} \label{thm2input} \sum_{\substack{\rho, \rho' \in \mathcal{Z}\\ 0<\gamma,\gamma'\le T\\ \gamma=\gamma'}}1 \le \Big(\mathbf{C}+o(1)\Big)\frac{T}{2\pi}\log T. \end{equation}
Then asymptotically,
\begin{enumerate}[label={(\roman*)},itemsep=4pt]
\item \label{thm3-i} the proportion of zeros of $\zeta(s)$ which are simple and on the critical line is $\ge 2-\mathbf{C}$,
\item \label{thm3-ii} the average of the proportions of simple zeros and of critical zeros of $\zeta(s)$ is $\ge (3-\mathbf{C})/2$,
\item \label{thm3-iii} the proportion of zeros of $\zeta(s)$ which are either simple or critical or both is $\ge (4-\mathbf{C})/3$.
\end{enumerate}
\end{theorem}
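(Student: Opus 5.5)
The plan is to reduce everything to an elementary counting inequality, one distinct ordinate at a time. For each distinct ordinate $\gamma$ with $0<\gamma\le T$, let $n_\gamma$ be the number of elements of the multiset $\mathcal{Z}$ with imaginary part $\gamma$. These are counted with multiplicity and over all real parts $\beta$. Then
\begin{equation*}
N(T)=\sum_{\gamma} n_\gamma, \qquad \sum_{\substack{\rho, \rho' \in \mathcal{Z}\\ 0<\gamma,\gamma'\le T\\ \gamma=\gamma'}}1=\sum_\gamma n_\gamma^2 .
\end{equation*}
The structural input is the functional equation together with $\overline{\zeta(\bar s)}=\zeta(s)$. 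Together they give that $\rho=\beta+i\gamma$ and $1-\bar\rho=1-\beta+i\gamma$ are zeros with the same multiplicity. Hence if some zero with ordinate $\gamma$ has $\beta\neq\frac12$, then $n_\gamma\ge 2$. Consequently $n_\gamma=1$ happens exactly when the ordinate carries a single zero that is simple and on the critical line.

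For each $\gamma$ I would record three local counts. Let $s_\gamma$ be the number of simple zeros, $c_\gamma$ the number of critical zeros, and $u_\gamma$ the number of zeros that are simple or critical, all among the $n_\gamma$ zeros at height $\gamma$. It then suffices to verify three pointwise inequalities, each valid for every positive integer $n=n_\gamma$:
\begin{equation*}
\mathbf{1}_{n=1}\ \ge\ 2n-n^2, \qquad s_\gamma+c_\gamma\ \ge\ 3n-n^2, \qquad 3u_\gamma\ \ge\ 4n-n^2 .
\end{equation*}
Summing over $\gamma$ and using the hypothesis \eqref{thm2input} gives the three bounds below, since $\sum_\gamma n_\gamma^2\le(\mathbf{C}+o(1))N(T)$ by \eqref{N(T)easy}. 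These yield (i), (ii) and (iii) respectively.
\begin{equation*}
\#\{\text{simple critical}\}\ge(2-\mathbf{C}-o(1))N(T), \quad S+Cr\ge(3-\mathbf{C}-o(1))N(T), \quad U\ge\tfrac{4-\mathbf{C}-o(1)}{3}N(T).
\end{equation*}

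The verification of the pointwise inequalities is a short case check.
\begin{itemize}
\item The first inequality holds because $n^2-2n=n(n-2)\ge 0$ for $n\ge2$, while for $n=1$ it reads $1\ge1$.
\item For the second, when $n=1$ we have $s=c=1$, giving $2\ge2$. When $n=2$ there are only two configurations. Either there is a double zero on the line ($s=0$, $c=2$), or there is an off-line simple pair $\beta,1-\beta$ ($s=2$, $c=0$). Two distinct simple zeros on the line at the same height are impossible. In both configurations $s+c=2\ge 2$. For $n\ge3$ the right side is $\le 0$.
\item For the third, when $n=1$ we have $3\ge3$. When $n=2$ the same two configurations give $u=2$, so $6\ge4$. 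When $n=3$ we need $u\ge1$. The possibilities are a triple critical zero, an off-line simple pair plus a simple critical zero, or a double critical zero plus a simple critical zero. Each has $u\ge1$. For $n\ge4$ the right side is $\le0$.
\end{itemize}

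I expect the only delicate step to be the $n=2$ and $n=3$ case enumeration. It must be justified by the symmetry $\beta\leftrightarrow1-\beta$ and by the fact that zeros on the line at a fixed height coincide. The remainder is bookkeeping, including the $o(1)$ terms, which are absorbed via $N(T)\sim\frac{T}{2\pi}\log T$. The identical argument applies verbatim to the range $T<\gamma,\gamma'\le 2T$, with $N(2T)-N(T)\sim\frac{T}{2\pi}\log T$ in place of $N(T)$.
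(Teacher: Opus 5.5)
Your proof is correct and is essentially the elementary counting argument the paper cites from \cite{GS25} (the paper itself does not reproduce it). You group the multiset by ordinate, use $\rho\mapsto 1-\bar\rho$ to see that $n_\gamma=1$ exactly for a simple critical zero and that $n_\gamma$ has the parity of the on-line multiplicity, and then sum Montgomery-type pointwise inequalities against the hypothesis. One harmless slip: for $n=3$, ``a double critical zero plus a simple critical zero'' is not a possible configuration, since there is only one point on the line at height $\gamma$; $u\ge 1$ follows directly because odd $n$ forces a positive on-line multiplicity.
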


We will use \ref{thm3-i} of Theorem \ref{thm2} to prove Theorem \ref{thm1}.
As we show in the following section, the bound \eqref{thm2input} in the assumption of Theorem \ref{thm2} can also be obtained unconditionally.
Its application, however, requires other assumptions on the zeros, where in our case, the box assumption of Theorem \ref{thm1} suffices. This will be clear as we prove Theorem \ref{thm1} in Section \ref{sec:proofThm1}.

%%%%%%%%%%%%%%%%%%%%%%%%%%%%%%%%%
\section{Unconditional Bound for Close Pairs of Zeros}

\begin{lemma} \label{lem2} For $0\le h\le T$, we have
\begin{equation} \label{GMbound} \sum_{\substack{\rho,\rho' \in \mathcal{Z} \\ 0<\gamma, \gamma'\le T \\ |\gamma'-\gamma| \le h}} 1 \ll \big(1 + h\log T\big) T\log T. \end{equation}
In particular, if $h=0$, then there exists a constant $\mathbf{C}\ge 1$ for which
\begin{equation} \label{thm2nottrivial} \sum_{\substack{\rho,\rho' \in \mathcal{Z} \\ 0<\gamma, \gamma'\le T \\ \gamma =\gamma' }} 1 \le \Big( \mathbf{C} + o(1) \Big) \frac{T}{2\pi}\log T. \end{equation}
\end{lemma}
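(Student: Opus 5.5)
The plan is to reduce everything to the local zero count $N(t+1)-N(t)=O(\log t)$ from \eqref{N(T)easy}, by cutting the range of $\gamma$ into short intervals. First assume $h\le 1$. Partition $(0,T]$ into $\lceil T\rceil$ consecutive intervals $I_k=(k,k+1]$, $0\le k<T$. If $\gamma\in I_k$ and $|\gamma'-\gamma|\le h\le 1$, then $\gamma'\in(k-1,k+2]$. Hence, for each fixed $\rho$ with $\gamma \in I_k$, the number of admissible $\rho'$ is at most
\[
N(\gamma+h)-N(\gamma-h)\ll 1+h\log T .
\]
Here we use that an interval of length $2h$ can be covered by $\lceil 2h\rceil\le 2$ unit intervals, each containing $O(\log T)$ zeros.

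This crude count only gives $O(\log T)$ per $\rho$ rather than $O(1+h\log T)$, so it needs sharpening. The better approach is to use the standard fact that $N(t+h)-N(t)=\frac{h}{2\pi}\log t+O(\log t)$ with $S(t)=O(\log t)$. That still gives $O(\log T)$ for small $h$, so the bound $(1+h\log T)T\log T$ cannot come from a pointwise count when $h$ is small. I therefore expect the $h=0$ case to be the main obstacle: it is a statement about the mean square of multiplicities, $\sum_{\gamma} m_\rho\ll T\log T$. This is not trivial, because pointwise one only knows $m_\rho\ll\log\gamma$.

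For this step I would invoke the unconditional Selberg-type / Gallagher--Mueller bound from \cite{GaMu78} (using \cite{Sel46}). By Selberg's moment estimate for $S(t)$, $\int_0^T |S(t+h)-S(t)|^2\,dt\ll T\log(2+h\log T)$. Then I write
\[
\sum_{\substack{\rho,\rho'\\ |\gamma-\gamma'|\le h}}1\le \sum_{k}\big(N(k+h')-N(k)\big)^2
\]
over a grid of step $h'=\max(h,1/\log T)$, which overcounts by a bounded factor after shifting the grid. I replace the sum over the grid by an integral average $\frac{1}{h'}\int_0^T (N(t+2h')-N(t))^2\,dt$. Next I expand $N(t+2h')-N(t)=\frac{h'}{\pi}\log t+O(1/T)+S(t+2h')-S(t)$. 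The main term contributes $\frac1{h'}\cdot T(h'\log T)^2=Th'\log^2T$. The $S$-term contributes $\frac1{h'}T\log(2+h'\log T)$, which is $\ll T\log T$ when $h'=1/\log T$, and is $\ll Th'\log^2 T$ for larger $h'$. Together this gives \eqref{GMbound}. The requirement $h\le T$ only ensures that $t+2h'$ stays within $O(T)$, so the $\log t\asymp\log T$ replacements are valid. For $h>1$ one can alternatively use the pointwise count directly: each $\rho$ has $\ll h\log T$ partners, giving $N(T)\,h\log T\ll hT\log^2T$.

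Finally, taking $h=0$ in \eqref{GMbound} gives the left side of \eqref{thm2nottrivial} as $\ll T\log T$, i.e.\ at most $\mathbf{C}\frac{T}{2\pi}\log T$ for some constant. Since the diagonal terms $\rho=\rho'$ already contribute $N(T)\sim\frac{T}{2\pi}\log T$, we may take $\mathbf{C}\ge1$, and this gives \eqref{thm2nottrivial}.
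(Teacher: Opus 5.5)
Your proposal is correct and follows essentially the argument the paper relies on (it cites Gallagher--Mueller \cite{GaMu78} and \cite[Lemma 9]{GM87}): bound the pair count by $\frac{1}{h'}\int (N(t+2h')-N(t))^2\,dt$ with $h'=\max(h,1/\log T)$, split off $S(t+2h')-S(t)$, and apply the unconditional mean-square bound $\int_0^T|S(t+h)-S(t)|^2\,dt\ll T\log(2+h\log T)$, which is Fujii's extension \cite{Fu74} of Selberg's method rather than Selberg's own moment estimate. Two small slips should be tidied: the opening pointwise claim $N(\gamma+h)-N(\gamma-h)\ll 1+h\log T$ is false (as you yourself note), and a grid sum with windows of length $h'$ misses pairs that straddle grid points, but the integral inequality with window $2h'$ that you actually use is valid.
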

This lemma and its proof may be found in \cite[Lemma 9]{GM87}. This was proved earlier in {\cite[Sec. 1]{GaMu78}}\footnote[2]{Gallagher and Mueller attribute this to Montgomery's unpublished manuscript \lq\lq Gaps Between Primes" \cite{Montgomery75}.} using a result of Fujii \cite{Fu74} following work of Selberg \cite{Sel46} who discovered the method for obtaining these results unconditionally. We remark here that in \cite[Sec. 1]{GaMu78}, Gallagher and Mueller assumed RH, thus their version of \eqref{GMbound} is obtained by adding their estimates for $N^\ast(T)$ and $N(T,U)$ on the fifth and sixth lines from the bottom in p. 209 of \cite[Sec. 1]{GaMu78}.
Note however that their proof there does not require RH.
Notice that \eqref{thm2nottrivial} shows that unconditionally one can obtain a constant $\mathbf{C}$ for which \eqref{thm2input} holds, although the value by this unconditional method is much larger than 2. Another application of this method for conditionally obtaining critical zeros can be found in \cite{GLSS1}.

%%%%%%%%%%%%%%%%%%%%%%%%%%%%%%%%%
\section{Montgomery's Theorem with and without RH}

In \cite{Montgomery73}, Montgomery introduced the subject of pair correlation of zeros of the Riemann zeta-function through analyses of the function
\begin{equation}\label{F(alpha,T)}
F(\alpha,T) := \sum_{\substack{\rho,\rho' \in \mathcal{Z}\\ 0<\gamma,\gamma'\le T}} T^{i\alpha(\gamma-\gamma')} w(\gamma-\gamma'), \qquad \text{where} \qquad \quad w(u) := \frac{4}{4+u^2},
\end{equation}
with $\alpha$ real and $T\ge 3$.
Montgomery also assumed RH holds for $F(\alpha,T)$, but we will not do that here. Thus the sum in $F(\alpha, T)$ runs over zeros $\rho=\beta+i\gamma$ and $\rho'=\beta'+i\gamma'$, but the weight does not depend on $\beta$ or $\beta'$ at all, and instead depends on $\gamma-\gamma'$. We next define 
\begin{equation}\label{calF(x,T)}
\mathcal{F}(\alpha,T) :=\sum_{\substack{\rho, \rho' \in \mathcal{Z} \\ 0<\gamma,\gamma' \le T}} T^{\alpha(\rho-\rho')}W(\rho-\rho'), \qquad \text{where} \qquad W(u) := \frac{4}{4 - u^2}. 
\end{equation}
Note here that if we assume RH so that $\beta=\beta'=1/2$, we have $\mathcal{F}(\alpha,T)=F(\alpha,T)$. We now state both the unconditional and RH versions of Montgomery's theorem.
\begin{MT}\label{MT}
For $\alpha$ real and $T\ge 3$, we have $\mathcal{F}(\alpha,T)$ is real, $\mathcal{F}(\alpha,T)\ge 0$, $\mathcal{F}(\alpha,T)= \mathcal{F}(-\alpha,T)$, and, uniformly for $0\le \alpha \le 1$ as $T\to \infty$,
\begin{equation}\label{Mon-noRH}
\mathcal{F}(\alpha,T) = \left((1+o(1))T^{-2\alpha}\log T +\alpha +o(1)\right)\frac{T}{2\pi}\log T.
\end{equation}
We also have $F(\alpha,T)$ is real, $F(\alpha,T)\ge 0$, $F(\alpha,T) = F(-\alpha,T)$, and
assuming RH,
\begin{equation}\label{Mon-RH}
F(\alpha,T) = \left((1+o(1))T^{-2\alpha}\log T +\alpha +o(1)\right)\frac{T}{2\pi}\log T,
\end{equation}
uniformly for $0\le \alpha \le 1$ as $T\to \infty$. The theorem also holds if we replace $0<\gamma , \gamma' \le T$
with $T<\gamma,\gamma'\le 2T$ in $\mathcal{F}(x,T)$ and $F(x,T)$.
\end{MT}
The proof of this theorem is in \cite{BGST-PC}, and the proof that we can replace $0<\gamma , \gamma' \le T$
with $T<\gamma,\gamma'\le 2T$ in $\mathcal{F}(x,T)$ and $F(x,T)$ is in \cite{BGST-CL}. The proof of the unconditional version for $\mathcal{F}(\alpha, T)$ in \eqref{Mon-noRH} is the same as Montgomery's original 1973 proof with minor modifications to account for taking the zeros to be $\rho=\beta+i\gamma$. The RH version for $F(\alpha,T)$ is thus just a special case of the unconditional result.
\bigskip

%%%%%%%%%%%%%%%%%%%%%%%%%%%%%%%%%
\section{Unconditional Fej{\'e}r Kernel Sum over Zero Differences without RH}

Montgomery used the Fej{\'e}r kernel to obtain his simple zero result by proving on RH the second part of the following lemma. More recently, Aryan \cite{Ary22} obtained a form of the unconditional part of this lemma.

\begin{lemma}[Aryan]\label{lem1} We have
\begin{equation}\label{fejerlem1}
\sum_{\substack{\rho,\rho' \in \mathcal{Z} \\ 0<\gamma,\gamma'\le T}} \left(\frac{\sin (\frac12 i (\rho-\rho')\log T)}{\frac12 i(\rho-\rho')\log T}\right)^2W(\rho-\rho') 
= \left(\frac43+o(1)\right) \frac{T}{2\pi}\log T .\end{equation} 
If we assume RH, then $\rho -\rho'=i(\gamma -\gamma')$ and we obtain
\begin{equation}\label{fejerlem1RH}\sum_{\substack{\rho,\rho' \in \mathcal{Z} \\ 0<\gamma,\gamma'\le T}} \left(\frac{\sin (\frac12 (\gamma-\gamma')\log T)}{\frac12 (\gamma-\gamma')\log T}\right)^2 w(\gamma -\gamma')
= \left(\frac43+o(1)\right) \frac{T}{2\pi}\log T .\end{equation}
In \eqref{fejerlem1RH} we can remove the weight $w(\gamma-\gamma')$ if we wish. In both results, we can also replace $0<\gamma , \gamma' \le T$
with $T<\gamma,\gamma'\le 2T$.
\end{lemma}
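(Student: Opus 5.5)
The plan is to integrate the Montgomery Theorem against the Fej\'er kernel in $\alpha$, exactly as in Montgomery's RH argument. The starting point is the elementary identity, for complex $z$ and $L=\log T$,
\begin{equation*}
\int_{-1}^{1}(1-|\alpha|)\,e^{\alpha z L}\,d\alpha=\left(\frac{\sinh(\frac12 zL)}{\frac12 zL}\right)^2 ,
\end{equation*}
which holds as an identity of entire functions. With $z=\rho-\rho'$ we have $T^{\alpha(\rho-\rho')}=e^{\alpha(\rho-\rho')L}$ and $\sinh(\tfrac12 zL)/(\tfrac12 zL)=\sin(\tfrac12 i zL)/(\tfrac12 i zL)$. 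Multiplying by $W(\rho-\rho')$ and summing over pairs, we get
\begin{equation*}
\int_{-1}^{1}(1-|\alpha|)\,\mathcal{F}(\alpha,T)\,d\alpha
\end{equation*}
equal to the left side of \eqref{fejerlem1}. The interchange of the finite sum and the integral is trivial, since the sum over $0<\gamma,\gamma'\le T$ is finite.

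Next I use $\mathcal{F}(\alpha,T)=\mathcal{F}(-\alpha,T)$ to write the integral as $2\int_0^1(1-\alpha)\mathcal{F}(\alpha,T)\,d\alpha$, and then insert \eqref{Mon-noRH}, which holds uniformly on $[0,1]$.

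The main term $\alpha$ gives
\begin{equation*}
2\int_0^1(1-\alpha)\alpha\,d\alpha=\frac13 .
\end{equation*}
The term $T^{-2\alpha}\log T$ gives
\begin{equation*}
2\int_0^1(1-\alpha)T^{-2\alpha}\log T\,d\alpha=1+O(1/\log T),
\end{equation*}
as one sees by substituting $u=\alpha\log T$, which turns the integral into $2\int_0^{\log T}(1-u/\log T)e^{-2u}\,du\to 1$.

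The error terms need a little care. The factor $(1+o(1))$ multiplies a term whose integral is bounded, so it contributes $o(1)$. The additive $o(1)$ is uniform in $\alpha$, so it also integrates to $o(1)$. Summing, the total is $(\tfrac43+o(1))\frac{T}{2\pi}\log T$, which is \eqref{fejerlem1}.

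For the RH statement \eqref{fejerlem1RH}, RH gives $\rho-\rho'=i(\gamma-\gamma')$, so $W(\rho-\rho')=w(\gamma-\gamma')$ and the kernel becomes $\left(\sin(\tfrac12(\gamma-\gamma')L)/(\tfrac12(\gamma-\gamma')L)\right)^2$. Hence \eqref{fejerlem1RH} follows immediately from \eqref{fejerlem1}; equivalently, one can repeat the argument with $F$ and \eqref{Mon-RH}.

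To remove the weight $w$ under RH, note that $0\le 1-w(u)=u^2/(4+u^2)\le \min(1,u^2/4)$. The change is therefore bounded by
\begin{equation*}
\sum \left(\frac{\sin(\frac12 uL)}{\frac12 uL}\right)^2\min(1,u^2)\ll \sum \frac{\min(1,u^2)}{1+u^2L^2}.
\end{equation*}
I split this sum by the size of $u=\gamma-\gamma'$.
\begin{itemize}
\item Pairs with $|u|\le 1/L$ contribute $\ll L^{-2}\cdot\#\{|\gamma-\gamma'|\le 1/L\}\ll T\log T/L^2$, by Lemma \ref{lem2} with $h=1/L$.
\item Pairs with $|u|>1/L$ contribute $\ll L^{-2}\sum 1/(1+u^2)\ll T\log^2T/L^2=O(T)$, by \eqref{w-estimate}.
\end{itemize}
Both pieces are $o(T\log T)$, so the weight can be dropped.

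The version with $T<\gamma,\gamma'\le2T$ follows identically from the corresponding version of the Montgomery Theorem.

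The only delicate point is justifying the uniform error terms in the $\alpha$-integration near $\alpha=0$, where $T^{-2\alpha}\log T$ is large. This is handled by the observation that the factor $(1+o(1))$ multiplies an integrand whose integral is $O(1)$.
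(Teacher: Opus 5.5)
Your proposal is correct and follows the paper's proof essentially step for step: the same Fej\'er-kernel identity integrated against $\mathcal{F}(\alpha,T)$, evenness in $\alpha$, the same evaluation $1+\frac13$ of the two main terms of \eqref{Mon-noRH}, and \eqref{fejerlem1RH} obtained as the RH special case. The one minor difference is in removing $w$: the paper uses the exact identity $w(u)=1-\frac14u^2w(u)$, so with $L=\log T$ the discrepancy is $\sum \sin^2(\frac12 uL)\,w(u)/L^2\le F(0,T)/L^2\ll T$ by \eqref{w-estimate} alone, whereas your appeal to Lemma \ref{lem2} for the pairs with $|u|\le 1/L$ is correct but unnecessary, since there the summand is already $\ll L^{-2}\ll L^{-2}w(u)$.
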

\begin{proof}[Proof of Lemma \ref{lem1}] For a complex number $z$, we have
\[\begin{split}\int_{-1}^1 e^{z\alpha}(1-|\alpha|)\, d\alpha &= \int_0^1 (e^{z\alpha}+e^{-z\alpha})(1-\alpha)\, d\alpha = 2 \int_0^1 \cosh(z\alpha)(1-\alpha)\, d\alpha \\&
= 2\left( \frac{\cosh z -1}{z^2}\right) = \left(\frac{\sinh \frac{1}{2}z}{\frac{1}{2}z}\right)^2 = \left(\frac{\sin \frac{1}{2}iz}{\frac{1}{2}iz}\right)^2. \end{split}\]
Thus, using \eqref{calF(x,T)} we have with $z=(\rho-\rho')\log T$ that
\begin{align*}
    \int_{-1}^1 \mathcal{F}(\alpha,T)(1-|\alpha|)\, d\alpha
    &=
    \sum_{\substack{\rho,\rho' \in \mathcal{Z} \\ 0<\gamma,\gamma'\le T}}
    \left(
    \int_{-1}^1 e^{\alpha(\rho-\rho')\log T}(1-|\alpha|)\, d\alpha
    \right)
    W(\rho-\rho')
    \\
    &=
    \sum_{\substack{\rho,\rho' \in \mathcal{Z} \\ 0<\gamma,\gamma'\le T}}
    \left(\frac{\sin \frac{1}{2}i(\rho-\rho')\log T}{\frac{1}{2}i(\rho-\rho')\log T}\right)^2
    W(\rho-\rho').
\end{align*}
Since $\mathcal{F}(\alpha,T)$ is even in $\alpha$, by \eqref{Mon-noRH} we have as $T\to \infty$,
\[\begin{split}
\int_{-1}^1 \mathcal{F}(\alpha,T)(1-|\alpha|)\, d\alpha
&=
2\int_0^1 \mathcal{F}(\alpha,T)(1-\alpha)\, d\alpha \\
&= 2\left(\int_0^1 \Big( \big(1+o(1)\big)T^{-2\alpha}\log T + \alpha+o(1) \Big) (1-\alpha)\,d\alpha\right) \frac{T}{2\pi}\log T \\
&= \left(\left. T^{-2\alpha}\left( \alpha - 1 + \frac1{2 \log T}\right)\right|_0^1 +\frac13\right) \big(1+o(1)\big)\frac{T}{2\pi}\log T \\
&= \left(\frac43+o(1)\right)\frac{T}{2\pi}\log T,
\end{split}\]
and combining this with the previous equation proves \eqref{fejerlem1}. 

To remove the weight $w(\gamma -\gamma')$, note that $w(u) =1 - \frac14u^2w(u)$, and therefore 
\[ \sum_{\substack{\rho,\rho' \in \mathcal{Z} \\ 0<\gamma,\gamma'\le T}} \left(\frac{\sin (\frac12 (\gamma-\gamma')\log T)}{\frac12 (\gamma-\gamma')\log T}\right)^2 w(\gamma-\gamma') = \sum_{\substack{\rho,\rho' \in \mathcal{Z} \\ 0<\gamma,\gamma'\le T}} \left(\frac{\sin (\frac12 (\gamma-\gamma')\log T)}{\frac12 (\gamma-\gamma')\log T}\right)^2 + O\left( \frac{F(0,T)}{\log^2 T}\right). \]
By \eqref{Mon-RH}, $F(0,T) \ll T\log^2T$, and thus the error term above is $O(T) =o(T\log T)$. (While \eqref{Mon-RH} assumes RH, by \eqref{w-estimate} this estimate holds unconditionally.)

Using $T<\gamma,\gamma'\le 2T$ in \hyperref[MT]{\bf MT}, we see Lemma \ref{lem1} also holds for this range as well.
\end{proof}

%%%%%%%%%%%%%%%%%%%%%%%%%%%%%%%%%
\section{Proof of Theorem \ref{thm1}}
\label{sec:proofThm1}

We first obtain the following bound on the Fej{\'e}r kernel due to Selberg \cite{Sel46}. 
\begin{lemma}[Selberg] \label{lem3} With $z=x+iy$, where $x$ and $y$ are real and $y\ge 0$, 
\begin{equation} \label{SelFbound} 
\left|\left(\frac{\sin(x+iy)}{x+iy}\right)^2 - \left(\frac{\sin x}{x}\right)^2 \right|\ll \frac{ye^{2y}}{1 + x^2 +y^2}.
\end{equation}
\end{lemma}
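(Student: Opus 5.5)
Write $f(z)=\left(\frac{\sin z}{z}\right)^2$. This is an entire, even function. The quantity to bound is $|f(x+iy)-f(x)|$. I will split into two regimes: $|z|\le 1$ (small) and $|z|>1$ (large), and in each show the difference is $\ll y e^{2y}/(1+x^2+y^2)$. Since $1+x^2+y^2\asymp 1+|z|^2$, the target is $\ll ye^{2y}$ when $|z|\le 1$ and $\ll ye^{2y}/|z|^2$ when $|z|>1$.

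\emph{Small $|z|$.} Here I use the mean value theorem along the vertical segment from $x$ to $x+iy$:
\[f(x+iy)-f(x)=i\int_0^y f'(x+it)\,dt.\]
Since $f$ is entire, $f'$ is bounded on the disc $|w|\le 1$. This gives $|f(x+iy)-f(x)|\ll y\le ye^{2y}$, which is enough.

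\emph{Large $|z|$, i.e.\ $|z|>1$.} The same representation works, but I need a bound for $f'$ off the real axis. Writing $f(w)=\sin^2 w/w^2$, we have
\[f'(w)=\frac{2\sin w\cos w}{w^2}-\frac{2\sin^2 w}{w^3}.\]
With $|\sin(x+it)|,|\cos(x+it)|\le e^{t}$, this gives $|f'(x+it)|\ll e^{2t}/|x+it|^2$ whenever $|x+it|\ge 1$.

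\emph{Main obstacle.} The segment from $x$ to $x+iy$ may pass inside the unit disc, namely when $|x|<1$. I handle this by splitting on the size of $|x|$.
\begin{itemize}
\item If $|x|\ge 1$, then every point of the segment has $|x+it|\ge|x|$. Also $|x|^2\gg 1+x^2$. Integrating the bound on $f'$ gives $\ll ye^{2y}/(1+x^2)$.
\item If in addition $y\le |x|$, then $1+x^2\asymp 1+x^2+y^2$, and we are done.
\item If $y>|x|$ (still with $|z|>1$), I skip the derivative and bound the two terms separately. First, $|f(x)|\le \min(1,x^{-2})$. Second, $|f(x+iy)|\le e^{2y}/|z|^2\ll e^{2y}/(1+x^2+y^2)$.
\item In that last case $|z|\asymp y$. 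So $y\gg 1$ when $|x|<1$, and $y\ge |x|\ge 1$ otherwise; either way $y\gg 1$. Hence $e^{2y}/|z|^2\le ye^{2y}/(1+x^2+y^2)$ up to constants.
\item It remains to control $f(x)$ in this case. If $|x|\ge 1$, then $x^{-2}$ is trivially fine, since $y>|x|$ gives $ye^{2y}/(1+x^2+y^2)\gg y/y^2\cdot e^{2y}\ge e^{2y}/y\gg x^{-2}$. If $|x|<1$, then $f(x)\le 1$, while the target $ye^{2y}/(1+y^2)$ is bounded below for $y\gg 1$.
\end{itemize}

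Collecting the cases gives \eqref{SelFbound}. The only delicate point is that the derivative bound fails near the origin, and the case split on $|x|$ versus $y$ handles it.
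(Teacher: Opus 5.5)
\textbf{There is a gap in your case analysis.} Your small case needs $x^2+y^2\le 1$. Your bullets then cover $|x|\ge 1$ (with either $y\le|x|$ or $y>|x|$) and $y>|x|$ with $|z|>1$. Nothing handles points with $|x|<1$, $y\le|x|$ and $x^2+y^2>1$, for example $x=0.9$, $y=0.5$.

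This region is nonempty. Moreover, $y$ can be arbitrarily small in it: take points just outside the unit circle near $x=\pm1$, such as $x=0.99$, $y=0.2$. So your third-bullet argument, which bounds $|f(x)|$ and $|f(x+iy)|$ separately, cannot be used there. You genuinely need the factor $y$, which only the integral of $f'$ supplies.

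The repair is immediate and uses your own small-$|z|$ argument on a slightly larger compact set. Define the small case as $|x|<1$, $0\le y\le 1$. The segment from $x$ to $x+iy$ then lies in $|w|\le\sqrt2$, where $f'$ is bounded. This gives $|f(x+iy)-f(x)|\ll y\ll ye^{2y}/(1+x^2+y^2)$, since $1+x^2+y^2\le 3$. With that choice, every remaining point has either $|x|\ge 1$, which your first three bullets handle, or $|x|<1<y$, which falls under your $y>|x|$ bullet. With this change the proof is complete; it is a bookkeeping slip, not a missing idea.

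\textbf{Comparison with the paper.} The paper needs no case split at all. Your remark that ``the derivative bound fails near the origin'' applies only to your cruder estimate $e^{2t}/|w|^2$, not to $f'$ itself. The paper writes $f'=2\cdot\frac{\sin w}{w}\cdot\frac{w\cos w-\sin w}{w^2}$ and bounds each factor by $\ll e^{t}\min\{1,1/|w|\}\ll e^{t}/\sqrt{1+x^2+t^2}$. This holds uniformly on the closed upper half-plane, origin included, since $\sin w/w$ and its derivative are bounded by $e^t$ everywhere. Hence $|f'(x+it)|\ll e^{2t}/(1+x^2+t^2)$ on the whole segment. This integrand is increasing in $t$, because its logarithmic derivative is $2-2t/(1+x^2+t^2)\ge 1$, so integrating over $[0,y]$ gives the lemma in one line.

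Your route replaces this uniform estimate with the trivial bound on $|f(x)|+|f(x+iy)|$ when $y\gg1$. That is legitimate, but it is what forced the case bookkeeping in which the hole appeared.
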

\begin{proof} This is Selberg's proof \cite[pp. 139--140]{Sel46}. We are bounding the change in the Fej{\'e}r kernel as we move up vertically from the real axis at $z=x$
to $z=x+iy$. Thus 
\[ \begin{split} \left| \left(\frac{\sin(x+iy)}{x+iy}\right)^2 - \left(\frac{\sin x}{x}\right)^2 \right| &= \left| \int_0^y \frac{d}{dt}\left( \left(\frac{\sin(x+it)}{x+it}\right)^2 \right)\,dt \right| \\&
= 2 \left|\int_0^y \frac{\sin(x+it)}{x+it}\left(\frac{(x+it)\cos(x+it)-\sin(x+it)}{(x+it)^2} \right)\,dt \right| \\&
\le 2 \int_0^y \left|\frac{\sin(x+it)}{x+it}\right|\left|\frac{(x+it)\cos(x+it)-\sin(x+it)}{(x+it)^2} \right|\,dt.
\end{split}\]
For $t\ge 0$, we have that 
\[ \left|\frac{\sin(x+it)}{x+it}\right| \ \text{and} \ \left|\frac{(x+it)\cos(x+it)-\sin(x+it)}{(x+it)^2} \right| \ll e^t\min\left\{1, \frac{e^t}{|x+it|}\right\} \ll \frac{e^t}{\sqrt{1+x^2 +t^2 }}, \]
and therefore the bound above is
\[ \left| \left(\frac{\sin(x+iy)}{x+iy}\right)^2 - \left(\frac{\sin x}{x}\right)^2 \right| \ll \int_0^y \frac{e^{2t}}{1+x^2+t^2}\, dt \ll \frac{ye^{2y}}{1 + x^2 +y^2}. \]
\end{proof}
\begin{proof}[Proof of Theorem \ref{thm1}]
Applying Lemma \ref{lem1} with the range $T<\gamma,\gamma'\le 2T$, we have by the assumption in Theorem \ref{thm1} that
\begin{equation}\label{K_b} K_b(T) := \sum_{\substack{\rho,\rho' \in \mathcal{Z} \\ \rho \in B_b, \,  \rho' \in B_b \\ T<\gamma,\gamma'\le 2T}} \left(\frac{\sin (\frac12 i (\rho-\rho')\log T)}{\frac12 i(\rho-\rho')\log T}\right)^2W(\rho-\rho') 
= \left(\frac43+o(1)\right) \frac{T}{2\pi}\log T.
\end{equation} 
Since $W(z)=1 +\frac{z^2}{4-z^2} $, we have
\[ K_b(T) = \sum_{\substack{\rho,\rho' \in \mathcal{Z} \\ \rho \in B_b, \,  \rho' \in B_b \\ T<\gamma,\gamma'\le 2T}} \left(\frac{\sin (\frac12 i (\rho-\rho')\log T)}{\frac12 i(\rho-\rho')\log T}\right)^2 + \sum_{\substack{\rho,\rho' \in \mathcal{Z} \\ \rho \in B_b, \,  \rho' \in B_b \\ T<\gamma,\gamma'\le 2T}} \left(\frac{\sin (\frac12 i(\rho -\rho') \log T)}{\frac12 i\log T}\right)^2\frac{1}{4-(\rho -\rho')^2}. \] 
We note that with $\rho,\rho'\in B_b$, we have $|\beta -\beta'|< b/\log T$, where $b\to 0$ as $T\to \infty $. Therefore, the second sum above is, using \eqref{w-estimate}, 
\[ \ll \frac{e^{b/2}}{\log^2 T} \sum_{\substack{\rho,\rho' \in \mathcal{Z} \\ 0<\gamma,\gamma'\le 2T}} w(\gamma-\gamma') \ll T =o(T\log T). \]

Next, by Lemma \ref{lem3} with $x= -\frac12 (\gamma -\gamma')\log T$ and $y = \frac{1}2(\beta-\beta')\log T $, and assuming $\beta \ge \beta'$, we have 
\[ \begin{split} \left(\frac{\sin (\frac12 i (\rho-\rho')\log T)}{\frac12 i(\rho-\rho')\log T}\right)^2 &= \left(\frac{\sin\Big( \big(-\frac12 (\gamma -\gamma')+ \frac{i}2(\beta-\beta')\big)\log T\Big)}{\big(-\frac12 (\gamma -\gamma')+ \frac{i}2(\beta-\beta')\big)\log T}\right)^2 \\&
= \left(\frac{\sin (\frac12 (\gamma-\gamma')\log T)}{\frac12 (\gamma-\gamma')\log T}\right)^2 + O\left(\frac{ (|\beta-\beta'|\log T) T^{|\beta -\beta'|}}{1 + \big( (\beta-\beta')^2 + (\gamma -\gamma')^2\big)\log^2T}\right).\end{split} \]
Hence 
\[ K_b(T) = \sum_{\substack{\rho,\rho' \in \mathcal{Z} \\ \rho \in B_b, \,  \rho' \in B_b \\ T<\gamma,\gamma'\le 2T}} \left(\frac{\sin (\frac12 (\gamma-\gamma')\log T)}{\frac12 (\gamma-\gamma')\log T}\right)^2 + O\Bigg( \sum_{\substack{\rho,\rho' \in \mathcal{Z} \\ \rho \in B_b, \,  \rho' \in B_b \\ T<\gamma,\gamma'\le 2T}}\frac{T^{|\beta -\beta'|} |\beta-\beta'|\log T}{1 + \big((\beta-\beta')^2 + (\gamma -\gamma')^2\big)\log^2T} \Bigg). \]
The error term here is by Lemma \ref{lem2},
\[\begin{split} &\ll be^b \sum_{\substack{\rho,\rho' \in \mathcal{Z} \\ 0<\gamma,\gamma'\le 2T\\ |\gamma-\gamma'|\leq \frac{1}{\log T}}}\frac{ 1}{1 + \big((\gamma -\gamma')\log T\big)^2} + be^b\sum_{k=1}^\infty \sum_{\substack{\rho,\rho' \in \mathcal{Z} \\ 0<\gamma,\gamma'\le 2T \\ \frac{2^{k-1}}{\log T} < |\gamma-\gamma'|\le \frac{2^k}{\log T}}}\frac{ 1}{1 + \big((\gamma -\gamma')\log T\big)^2} \\&
\ll be^b\sum_{k=1}^\infty \frac{1}{2^{2k}}\sum_{\substack{\rho,\rho' \in \mathcal{Z}\\0<\gamma,\gamma'\le 2T\\ |\gamma-\gamma'|\le \frac{2^k}{\log T}}}1 \ll \ be^b \sum_{k=1}^\infty \frac{2^k T\log T}{2^{2k}} \ll \ be^b T \log T = o(T\log T) ,
\end{split}\]
since $b\to 0$ as $T\to \infty$.
By \eqref{K_b} we conclude that under the assumptions of Theorem \ref{thm1} we have 
\[\begin{split} \sum_{\substack{\rho, \rho' \in \mathcal{Z}\\ T<\gamma,\gamma'\le 2T\\ \gamma=\gamma'}}1 = \sum_{\substack{ \rho, \rho' \in \mathcal{Z} \\ \rho\in B_b,\, \rho'\in B_b \\ T<\gamma,\gamma'\le 2T\\ \gamma=\gamma'}}1 &\le \sum_{\substack{\rho, \rho' \in \mathcal{Z}\\ \rho\in B_b,\, \rho'\in B_b\\ T<\gamma,\gamma'\le 2T}} \left(\frac{\sin (\frac12 (\gamma-\gamma')\log T)}{\frac12 (\gamma-\gamma')\log T}\right)^2 \\ & =K_b(T) +o(T\log T) =\left(\frac43+o(1)\right) \frac{T}{2\pi}\log T.\end{split} \]
Thus $\mathbf{C}=4/3$ holds in Theorem \ref{thm2} and \ref{thm3-i} of Theorem \ref{thm2} immediately implies Theorem \ref{thm1}.
\end{proof}

%%%%%%%%%%%%%%%%%%%%

\section*{Acknowledgment of Funding}

The authors thank the American Institute of Mathematics for its hospitality and providing a pleasant environment where our work on this project began. %During the time this exposition is written and completed,
The second author is currently supported by the Inamori Research Grant 2024, JSPS KAKENHI Grant Number 22K13895, and Kyushu University International Research Leader Training Program (EBXU0101).

\bibliographystyle{alpha}
\bibliography{AHReferences}

\end{document}